\theoremstyle{definition}
\newtheorem{theorem}{Theorem}
\newtheorem{lemma}[theorem]{Lemma}
\newtheorem{corollary}[theorem]{Corollary}
\newtheorem{definition}[theorem]{Definition}
\newtheorem{claim}[theorem]{Claim}
\newcommand{\paths}{\Omega}
\newcommand{\aut}{\mathop{\mathrm{Aut}}}
\begin{document}

\title{Non-elementary amenable subgroups of automata groups}

\author{Kate Juschenko\thanks{kate.juschenko@gmail.com}}
\affil{Department of Mathematics, Northwestern University, USA}

\date{\today}

\maketitle

\begin{abstract}
We consider groups of automorphisms of locally finite trees, and give conditions on its subgroups that imply that they are not elementary amenable. This covers all known examples of groups that are not elementary amenable and act on the trees: groups of intermediate growths and Basilica group, by giving a more straightforward proof. Moreover, we deduce that all  finitely generated branch groups are not elementary amenable, which was conjectured by Grigorchuk. 
\end{abstract}
\section{Introduction}

A group $G$ is amenable if there exists a finitely additive translation invariant probability measure on all subsets of $G$. This definition was given by John von Neumann, \cite{von-neumann1}, in a response to Banach-Tarski, and Hausdorff paradoxes. He singled out the property of a group which forbids paradoxical actions.

The class of {\it elementary amenable groups}, denoted by  $EG$, was introduced by Mahlon Day in \cite{day:semigroups}, as the smallest class of groups that contain finite and abelian groups and is closed under taking subgroups, quotients, extensions and directed unions. The fact that the class of amenable groups is closed under these operations was already known to von Neumann, \cite{von-neumann1}, who noted at that at that time there was no known amenable group which did not belong to $EG$. 

We distinguish between "the group is not elementary amenable" and "the group is non-elementary amenable". The first saying stands for the class of all groups which are not in $EG$, they can be either amenable or not, or amenability is an open question for them. At the same time "non-elementary amenable" is the class of amenable groups which are not in $EG$.

No substantial progress in understanding this class has been made until~80s, when  Chou, \cite{C}, showed that all elementary amenable groups have either polynomial or exponential growth, and Rostislav Grigorchuk, \cite{grigorchuk:milnor_en} gave an example of a group with intermediate growth. Grigorchuk's group served as a starting point in developing the theory of groups with intermediate growth, all of them being non-elementary amenable. In the same paper Chou showed that every simple finitely generated infinite group is not elementary amenable. In \cite{JM} it was shown that the  topological full group of Cantor minimal system is amenable.  By the results of  Matui, \cite{Matui}, this group has a simple and finitely generated commutator subgroup, in particular, it is not elementary amenable. This was the first example of infinite simple finitely generated amenable group.

An important class of groups containing many non-elementary amenable groups is automata groups.
The first example of non-elementary amenable group with exponential growth is Basilica group introduced by Grigorchuk and Zuk, \cite{zukgrigorchuk:3st},  amenability of which was a technically difficult problem solved by  Bartholdi and Vir\'ag in \cite{barthvirag}. In \cite{bkn:amenability},  Bartholdi,  Nekrashevych and  Kaimanovich showed amenability of the group of bounded automata of finite state. This result was extended to automata of linear activity by Amir, Angel and Vir\'ag, \cite{amirangelvirag:linear}. In it's turn, this result was extended to automata with quadratic activity, \cite{JNS}. Moreover, a general approach for proving amenability of all known non-elementary amenable groups was introduced in \cite{JNS}. It is based on extensive amenability property, which was studied before in \cite{JM}, \cite{JS} and applied to interval exchange transformation group in \cite{JMMS}. 

While there is a unique approach for proving amenability of all currently known non-elementary amenable groups, there is no unique approach for proving that those groups are non-elementary amenable. Currently there are two different sources of non-elementary amenable groups: one comes from groups that act on trees and another is the class of topological full groups of Cantor minimal systems. In Theorem \ref{main} we give conditions on groups acting on trees which imply that they are not elementary amenable. The class of groups that satisfy this theorem contains the class of all known cases of groups that are not in $EG$ and act on a tree. In \cite{grig-review}, Grigorchuk conjectured that finitely generated branch groups are not elementary amenable. In \cite{MN}, Myropolska and Nagnibeda have obtained a partial result on this conjecture assuming in addition that the group is just infinite. As one of the applications of Theorem \ref{main} we obtain Grigorchuk's conjecture. We refer the reader to \cite{BGS} for an excellent survey on branch groups. We also present a direct proofs that Basilica and Grigorchuk's group are not elementary amenable, which we believe are quite illustrative. It also follows that the iterated monodromy group of quadratic polynomials (except $z^2-2$  and $z^2$) discussed in \cite{BN} are not elementary amenable.

\bigskip

{\bf Acknowledgments:} I am grateful to Rostyslav Grigorchuk for motivating us to enter the subject, and sharing ideas and insights around Basilica group. Volodia Nekrashevych made an important observation on the early draft of the paper, which helped us to prove that the class of groups which are not elementary amenable is much larger then we expected.

\section{Automorphisms of homogeneous trees}
Let $\Omega=(E_{1}, E_{2},
\ldots)$ be a sequence of finite sets.
 Define $\paths_n=E_1\times E_2\times\cdots\times E_n$,  where $\paths_0$ is a
singleton. The disjoint union
$\paths^*=\bigsqcup_{n\ge 0}\paths_n$ can be naturally identified with vertices of  a rooted tree. The sets
$\paths_n$ are its levels, i.e., we will  consider them as the sets of vertices on the $n$-th level of the rooted tree. Vertices $v_1\in\paths_n$ and
$v_2\in\paths_{n+1}$ are connected by an edge if and only if $v_2$ is
a continuation of $v_1$, namely $v_2=v_1e$ for some $e\in
E_{n+1}$. 

Denote
by $\aut(\paths^*)$ the automorphism group of the rooted tree $\paths^*$.
The group $\aut(\paths^*)$ acts transitively
on each of the levels $\paths_n$. Denote by $_n\paths^*$ the tree of finite paths of the
``truncated'' diagram defined by the sequence $$_m\paths=(E_{m+1}, E_{m+2},
\ldots).$$

For every $g\in\aut(\paths^*)$ and $v\in\paths_n$ there exists an
automorphism $g|_v\in\aut(_m \paths^*)$ such that
\[g(vw)=g(v)g|_v(w)\]
for all $w\in$ $_m\paths^*$. The automorphism $g|_v$ is called the
\emph{section} of $g$ at  $v$.

We list obvious properties of sections. For all $g_1, g_2, g\in\aut(\paths^*)$, $v\in\paths^*$,
$v_2\in$$_n \paths^*$ and  $v_1\in\paths_n$ we have
\begin{equation*}
(g_1g_2)|_v=g_1|_{g_2(v)}g_2|_v,\qquad g|_{v_1v_2}=g|_{v_1}|_{v_2}.
\end{equation*}

An automorphism $g\in\aut\Omega^*$ is said to be \emph{finitary} of
\emph{depth} at most $n$ if all sections $g|_v$ for $v\in\Omega_n$ are trivial.

Let $g\in\aut\Omega^*$. Denote by $\alpha_n(g)$ the number of paths
$v\in\Omega_n$ such that $g|_v$ is non-trivial. We say that
$g\in\aut\Omega^*$ is \emph{bounded} if the sequence $\alpha_n(g)$ is bounded.\\

If $g\in\aut\Omega^*$ is bounded, then there exists a finite set $P\subset\paths$
of infinite paths such that $g|_v$ is non-finitary only if $v$ is a
beginning of some element of $P$. Consequently, bounded automorphisms
of $\Omega^*$ act on $\Omega$ by homeomorphisms of bounded type.\\

Suppose that the sequence $\Omega=(E_1, E_2, \ldots)=(X, X, \ldots)$ is
constant, so that $_m\Omega^*$ does not depend on $m$, and $X=\{x_1,\ldots,x_d\}$. In this case, we will denote $\Omega^*$ by $X^*$.
An automorphism $g\in\aut\Omega^*$ is said to be
\emph{finite-state} if the set
$\{g|_v\;:\;v\in\Omega^*\}\subset\aut\Omega^*$ is finite.

Let $Aut(X^*)^X$ be the direct product of $d$ copies of $Aut(X^*)$. Consider the wreath product $$Aut(X^*)\wr S_d=Aut(X^*)^X\rtimes S_d,$$
with the multiplication  is given by 
$$(a_1,\ldots, a_d)\varepsilon\cdot (b_1,\ldots, b_d)\nu=(a_1b_{\varepsilon^{-1}(1)},\ldots, a_db_{\varepsilon^{-1}(d)})\varepsilon\nu,$$
for every $(a_1,\ldots, a_d)\varepsilon, (b_1,\ldots, b_d)\nu\in Aut(X^*)^X\rtimes S_d$. Note that there exists a homomorphism from $Aut(X^*)$ into $Aut(X^*)\wr S_d$, given by $$g\mapsto (g|_{x_1},g|_{x_2},\ldots, g|_{x_d})\sigma_g,$$
where $\sigma_g$ is the permutation that $g$ induces on the first level of the tree. In the later sections, we will identify $g$ with its image under this map.

Assume that $G$ fixes the $n$-th level of the tree, then we can define a projection on the tree that grows from a vertex on the $n$-th level.

We will use the following notations. The stabilizer of a vertex $v$ of the tree will be denoted by $Stab_G(v)$. The stabilizer of the $n$-th level is denoted by $Stab_G(n)$. 

{\it The rigid stabilizer} of a vertex $v\in _n\paths^*$ is defined by
$$rist_G(v)=\{g\in G: g\in St_G(n), \text{ if } w\neq v, w\in _n\paths^* \text{ then }g|_{w}=id \}.$$

The rigid of the $n$-th level, $rist_G(n)$, is defined as a group generated by all $rist_G(v)$ with $v\in _n\paths^*$. In other words, $$rist_G(n)=\prod\limits_{v\in  _n\paths^*} rist_G(v).$$

\section{Elementary amenable groups}
Let $EG_0$ be the class of all finite and abelian groups. Assume that $\alpha$ is an ordinal such that for all ordinals $\beta<\alpha$ we have already constructed the class $EG_{\beta}$. If $\alpha$ is a limit ordinal we set $$EG_{\alpha}=\bigcup\limits_{\beta<\alpha} EG_{\beta}.$$

Otherwise we set $EG_{\alpha}$ as the class of groups which can be obtained from $EG_{\alpha-1}$ by applying one time either extension or direct union.

It was shown by Chou, \cite{C}, that for every ordinal $\alpha$ the class $EG_{\alpha}$ is closed under taking subgroups and quotients. In the same paper, he showed that the class of elementary amenable groups is the smallest class which contains all finite and all abelian groups and is closed under taking extensions and direct limits.

Similarly to elementary amenable groups one can define a class of elementary subexponetially amenable groups. Recall that a group $\Gamma$ generated by a finite set $S$ is of subexponential growth if $\lim_n |B(n)|^{1/n}=1$, where $B(n)$ is the ball of radius $n$ in the Cayley graph $(\Gamma,S)$. 

Let $SG_0$ be the class of groups whose all finitely generated subgroups have subexponential growth. Let $\alpha>0$ be an ordinal such that we have already defined $SG_{\beta}$ for all $\beta<\alpha$. Then if $\alpha$ is a limit ordinal, we set 
$$SG_{\alpha}=\bigcup\limits_{\beta<\alpha} SG_{\beta}$$
and if $\alpha$ is not a limit ordinal, we set $SG_{\alpha}$ to be the class of groups which can be obtained from $SG_{\alpha-1}$ by applying either extension or a direct union one time. Define
$$SG=\bigcup\limits_{\alpha} SG_{\alpha}$$

Following Chou's ideas one can show that the class $SG$ is the smallest class of groups which contain all groups of sub-exponential growth and is closed under taking subgroups, quotients, extensions, and direct unions. Moreover, each $SG_{\alpha}$ is closed under taking subgroups and quotients.

The class $SG$ will be called  {\it the class of elementary subexponentially amenable groups}. Obviously, the class $SG$ contains the class of elementary amenable groups $EG$.\\

In the following theorem we give a condition on finitely generated subgroups of  automorphisms of trees, that imply that these groups are not in the class $EG$. We will discuss amenability of these groups later.

\begin{theorem}
Let $\mathcal{A}$ be a class of finitely generated, non-abelian, infinite groups acting on locally finite rooted trees. Assume that for every $G$ in $\mathcal{A}$, for every vertex $v$ the rigid stabilizer  $rist_G(v)$ contains a subgroup  admitting a homomorphic image in $\mathcal{A}$. Then every group of $\mathcal{A}$ is not elementary amenable. Moreover, if all groups of $\mathcal{A}$ are of exponential growth, then the groups of $\mathcal{A}$ are not elementary subexponentially  amenable. 
\end{theorem}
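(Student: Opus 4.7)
My plan is to carry out a transfinite induction on the elementary amenability rank, modelled on Chou's proof that simple finitely generated infinite groups are not elementary amenable, but replacing simplicity with the rigid stabilizer hypothesis.

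Suppose for contradiction that $\mathcal{A} \cap EG \neq \emptyset$, and let $\alpha$ be minimal with $\mathcal{A} \cap EG_\alpha \neq \emptyset$; pick $G \in \mathcal{A} \cap EG_\alpha$. Since groups in $\mathcal{A}$ are non-abelian and infinite, $G \notin EG_0$, so $\alpha \geq 1$. If $\alpha$ were a limit ordinal, then $G \in EG_\alpha = \bigcup_{\beta<\alpha}EG_\beta$ would put $G$ in some $EG_\beta$ with $\beta<\alpha$, contradicting minimality. Thus $\alpha$ is a successor ordinal, and $G$ is either a direct union of $EG_{\alpha-1}$ groups or an extension $1 \to N \to G \to Q \to 1$ with $N, Q \in EG_{\alpha-1}$. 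Finite generation of $G$ rules out the direct union possibility, since a finitely generated direct union coincides with one of its members.

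The crux is the extension case. I intend to locate a vertex $v$ such that the subgroup $H_v \leq rist_G(v)$ guaranteed by the hypothesis lies in $N$. Given such $v$, $H_v$ is a subgroup of $N \in EG_{\alpha-1}$, hence $H_v \in EG_{\alpha-1}$, and so its quotient $G_v \in \mathcal{A}$ lies in $EG_{\alpha-1}$, contradicting minimality of $\alpha$. To locate such a vertex, I would exploit the commutativity of rigid stabilizers at distinct vertices of a given level: the images $\overline{H_v}$ in $Q$ form a family of pairwise commuting subgroups of $Q$, one for each $v \in \Omega_n$. The size of this commuting family grows without bound as $n$ grows while $Q \in EG_{\alpha-1}$ is fixed, and by minimality $Q$ contains no subgroup belonging to $\mathcal{A}$. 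Combined with the recursive self-similar structure of the hypothesis --- each $G_v$ itself belongs to $\mathcal{A}$ and inherits the rigid stabilizer condition, which can be iterated --- this should force some $\overline{H_v}$ to be trivial, i.e., $H_v \leq N$.

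The principal obstacle is this last step: converting the combination of an unbounded commuting family and the absence of $\mathcal{A}$-subgroups in $Q$ into a definite contradiction. The recursive nature of the hypothesis --- which lets one pull back the rigid stabilizer structure from each $G_v$ back into $G$ --- is essential and must be leveraged carefully to descend the ordinal rank. The \textbf{moreover} statement replaces $EG$ by $SG$ throughout; the induction is structurally identical, with the only modification in the base case, where the exponential growth hypothesis excludes groups of $\mathcal{A}$ from $SG_0$ (a finitely generated group is among its own finitely generated subgroups, so $SG_0$ membership for a finitely generated group is the same as subexponential growth).
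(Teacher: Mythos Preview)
Your transfinite induction framework --- minimality of $\alpha$, elimination of limit ordinals, elimination of the direct-union case via finite generation --- matches the paper exactly. The gap is in the extension case. You propose to work in the quotient $Q=G/N$, arguing that the images $\overline{H_v}$ form an unbounded family of pairwise commuting subgroups and hoping this forces some $\overline{H_v}$ to vanish. But nothing here reduces the rank: an elementary amenable group can contain arbitrarily large commuting families of nontrivial subgroups, and the fact that $Q$ has no subgroup in $\mathcal{A}$ is inert, since $\overline{H_v}$ is the quotient of $H_v$ by $H_v\cap N$, not a quotient onto a member of $\mathcal{A}$. If you instead note that $H_v$ is an extension of $H_v\cap N\in EG_{\alpha-1}$ by $\overline{H_v}\in EG_{\alpha-1}$, you only recover $H_v\in EG_\alpha$, hence $G_v\in EG_\alpha$ --- no improvement. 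The ``recursive self-similar structure'' you invoke does not by itself close this circle; you yourself flag the step as the principal obstacle, and indeed it does not go through as stated.

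The paper bypasses $Q$ entirely and works inside $N$ via a commutator trick, which is the missing idea. Take a nontrivial $g\in N$ and a vertex $v$ with $g(v)\neq v$ at the same level (found just below the last level fixed by $g$). For any $w\in rist_G(v)$ one has $[g,w]\in N$ by normality, and $[g,w]=(gwg^{-1})w^{-1}$ with $gwg^{-1}\in rist_G(g(v))$ supported away from the subtree at $v$; projecting to the subtree at $v$ yields $w^{-1}$. Thus the projection of a suitable subgroup of $N$ onto the $v$-subtree contains all of $rist_G(v)$. Closure of $EG_{\alpha-1}$ under subgroups and quotients then places $rist_G(v)$, and hence its $\mathcal{A}$-subquotient, in $EG_{\alpha-1}$, contradicting minimality of $\alpha$. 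The same argument with $SG$ in place of $EG$ gives the ``moreover'' clause, exactly as you say.
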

\begin{proof}
Let us firstly deduce the statement from the following claim and then prove the claim itself.\\

\begin{claim}\label{claim} Let $G\in \mathcal{A}$. For every normal subgroup $H$ of $G$, there exists  a vertex $v$ a subgroup $H'$ of $H$ and a homomorphism $\pi:H'\rightarrow H''$ onto some group $H''$ which contains $K:=rist_G(v)$ as a subgroup.\\
\end{claim}

Firstly, we will deduce the statement from the claim. Let $G\in \mathcal{A}$. Since $G$ is infinite and non-abelian, we have that it is not in the class $EG_0$. To reach a contradiction, assume $G\in EG_{\alpha}$, where $\alpha$ is  a minimal ordinal among all such that $G\in EG_{\alpha}$ and $G\in \mathcal{A}$.  Clearly, $\alpha$ is  not limit ordinal. Thus $G$ is obtained from the class $EG_{\alpha-1}$ by taking either extension or a direct union. It can not be obtained as a direct union, since otherwise $G\in EG_{\alpha-1}$ which contradicts  minimality of $\alpha$. Hence $G$ is obtained as an extension of groups from the class $EG_{\alpha-1}$.  Therefore we can find a normal subgroup  $H$ of $G$, which is in the class $EG_{\alpha-1}$. 
Then $H'$, $H''$, and, thus $K$, are in $EG_{\alpha-1}$. Since $K$ contains a subgroup  (again in $EG_{\alpha-1}$) admitting a homomorphic image in $\mathcal{A}$, which implies that there exists a group in $\mathcal{A}\cap EG_{\alpha}$. This contradicts minimality of $\alpha$.  Since the class of subexponetially amenable groups is closed under taking subgroups and quotients, exactly the same considerations imply that if $G$ is of exponential growth then it is not subexponetially elementary amenable.

To prove the claim we will consider the wreath product presentation of $G$. Let $g$ be a non-trivial element in $H$. Then let $n$ be a level of the tree which is fixed by $g$ and the next level is not fixed by $g$. Then, for some $d$ and $1\leq k\leq d$, $g$ has the form
$$g=(g_1, \ldots, g_{k-1},\overline{g},g_{k+1},\ldots, g_{d})$$
where $\overline{g}$ is an automorphism of the tree which does not fix the first level. In other words, $$\overline{g}=(\overline{g}_1,\overline{g}_2,..,\overline{g}_d)s,$$ for some $g_1,g_2,..,g_d \in G$ and a permutation $s\in Sym(d)$, where $d$ corresponds to the number of vertices of the $n$-th level.

Consider an automorphism of the tree of the form 
$$h=(h_1, h_2,\ldots, h_{d}),$$ 
for some automorphisms  $h_i\in K$. 
Then 
$$[g,h]=([g_{1},h_1],\ldots,[\overline{g}, h_{k}],\ldots, [g_{d}, h_{d}])\in H.$$

By assumptions $\overline{h}=h_{k}$ can be chosen in $K\times K\times \ldots \times K$. Assume $m$ is not fixed by the permutation $s$, and let  $\overline{h}=(1,\ldots, 1, w, 1,\ldots 1)$, where $w\in K$ is placed on the $m$'s position.  Hence, the $m$-th coordinate of $[\overline{g},\overline{h}]$ is equal to $w$. Thus, $H$ admits a subgroup, whose homomorphic image contains $K$ as a subgroup. \\
\end{proof}

The statement of the next corollary follows follows from assumption $\mathcal{A}=\{G\}$. 

\begin{corollary}\label{main}
Let $G$ be a finitely generated, non-abelian, infinite group of automorphisms of a $d$-homogeneous tree $\mathcal{T}$. If there exists a subgroup $K<G$ such that the following holds:
\begin{enumerate}[(i)]
\item $K\times K\times \ldots \times K<K$, where the product is taken $d$ times,
\item for some vertex $v$ on the tree, $p_v(Stab_{K}(v))$ contains $G$.
\end{enumerate}
Then $G$ is not elementary amenable. Moreover, if $G$ is of exponential growth, then $G$ is not subexponentially elementary amenable. 
\end{corollary}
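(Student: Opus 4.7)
The plan is to invoke the preceding theorem with the singleton class $\mathcal{A}=\{G\}$. Since $G$ is by hypothesis finitely generated, non-abelian, infinite, and acts on the locally finite rooted tree $\mathcal{T}$, the only thing left to verify is the recursive condition of the theorem: for every vertex $v$ of $\mathcal{T}$, the rigid stabilizer $rist_G(v)$ contains a subgroup admitting $G$ itself as a homomorphic image. Both conclusions of the corollary then follow at once from the corresponding conclusions of the theorem.

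The two hypotheses will feed into this verification in complementary ways. Condition (i) is what lets me push $K$ deep into the tree: iterating the wreath-recursive inclusion $K^d<K$ along the path from the root to $v=v_1\cdots v_n$ produces an embedding $\iota_v\colon K\hookrightarrow rist_K(v)\subseteq rist_G(v)$ satisfying $p_v\circ\iota_v=\mathrm{id}_K$ under the identification of the subtree at $v$ with $\mathcal{T}$. Condition (ii) is then what lets me pull $G$ back out: for the vertex $v_0$ provided by (ii), the subgroup $\iota_v(Stab_K(v_0))\subseteq rist_G(v)$ fixes the concatenated vertex $vv_0$, and the section-composition rule $g|_{vv_0}=(g|_v)|_{v_0}$ gives
\[
p_{vv_0}\bigl(\iota_v(k)\bigr)=p_{v_0}(k) \qquad\text{for every } k\in Stab_K(v_0),
\]
so the image of this projection contains $G$. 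Setting $N:=\iota_v(Stab_K(v_0))\cap p_{vv_0}^{-1}(G)$ then yields a subgroup $N\leq rist_G(v)$ with $p_{vv_0}(N)=G$, which is exactly what the theorem requires.

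I do not foresee any real obstacle: once the theorem is in hand, the corollary is essentially a bookkeeping specialization of it. The one point that calls for care is arranging the final homomorphism to land precisely on $G$, rather than on the possibly larger overgroup $p_{v_0}(Stab_K(v_0))$, which is the reason for intersecting with $p_{vv_0}^{-1}(G)$ in the definition of $N$.
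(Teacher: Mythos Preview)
Your proposal is correct and follows the paper's approach exactly: the paper's entire proof is the single remark that the corollary follows by taking $\mathcal{A}=\{G\}$ in the preceding theorem. Your argument---iterating~(i) to embed $K$ into $rist_G(v)$ at an arbitrary vertex, then invoking~(ii) and the section-composition rule to project onto $G$---is precisely the verification the paper leaves implicit, including the care you take to intersect with $p_{vv_0}^{-1}(G)$ so that the image is $G$ on the nose.
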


\section{Applications}

\paragraph*{Branch groups.} 
Suppose that $m=\{m_i\,|\,i\geq 0\}$ is a sequence of natural numbers. We denote by $T_m$ the rooted tree, such that that any vertex at level $i$ has $m_i$ children. Such trees are called spherically homogeneous.\\

For a vertex $v$ of $T_m$ denote $T_v$ the subtree of $T_m$ which lies under $v$.  Let $|v|$ be the distance from the root to $v$.

A group $G$ acting on a spherically homogeneous tree is called {\it branch group} if the following conditions are satisfied:
\begin{enumerate}
\item All rigid stabilizers are of finite index in $G$, i.e., $|G:rist_G(n)|<\infty$ for every $n\in \mathbb{N}$;
\item $G$ acts transitively on each level.
\end{enumerate}

We remark that branch groups can not be of polynomial growth. We start with analyzing rigid stabilizers.

\begin{definition}
Let $G$ be a group of automorphisms on $T_m$, and $v$ a vertex of $T_m$. Denote by $st_G(v)$ the subgroup of $G$ that fixes $v$. For $g\in st_G(v)$ denote $g_{|v}=\pi_v(g)$ the automorphism of the subtree $T_v$, obtained by restricting $g$ to $T_v$.
\end{definition}
If $G$ is a branch group, we are going to show that every $rist_G(v)$ has a homomorphic image that is also branch. To construct it, we will choose a subgraph of $T_v$ where $rist_G(v)$ acts transitively, and restrict $rist_G(v)$ to that subgraph.\\

We start with a lemma. Let $Y$ be a subset of vertices of level $n$ of the tree $T_m$. Denote by $T_Y$ the union of subtrees $T_v$, $v\in Y$. If $g$ fixes each element of $Y$, denote by $g_{|Y}$ the restriction of $g$ to $T_Y$.
\begin{lemma}\label{restriction_index}
  Let $G$ be a branch group. Then $\prod_{v\in Y}rist_G(v)$ is of finite index in $st_G(n)_{|Y}$. 
\end{lemma}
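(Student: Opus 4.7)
The plan is to prove the lemma by exhibiting $\prod_{v\in Y} rist_G(v)$ as the image, under the restriction homomorphism to $T_Y$, of a finite-index subgroup of $st_G(n)$. Concretely, let $V_n$ denote the set of all vertices at level $n$, and consider the restriction map
\[
\rho : st_G(n) \longrightarrow \aut(T_Y), \qquad g \longmapsto g_{|Y},
\]
which is a well-defined homomorphism because every $g\in st_G(n)$ fixes each $v\in Y\subseteq V_n$ and hence preserves $T_Y$ setwise. By definition the image of $\rho$ is precisely $st_G(n)_{|Y}$.

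Next I would compare $\rho$ on the rigid stabilizer of level $n$. Recall from the earlier discussion that $rist_G(n)=\prod_{v\in V_n} rist_G(v)$, and each factor $rist_G(v)$ acts only on $T_v$. Restricting such an element to $T_Y$ kills every coordinate with $v\notin Y$ and acts as the identity on coordinates with $v\in Y$, so
\[
\rho\bigl(rist_G(n)\bigr) = \prod_{v\in Y} rist_G(v),
\]
regarded in the natural way as a subgroup of $\aut(T_Y)$.

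The final step invokes the branch hypothesis: by definition $[G : rist_G(n)]<\infty$, hence $[st_G(n):rist_G(n)]<\infty$. Since the image of a finite-index subgroup under any surjective homomorphism has finite index in the image, we get
\[
\bigl[\,st_G(n)_{|Y}\,:\,\textstyle\prod_{v\in Y} rist_G(v)\,\bigr] \;\le\; [\,st_G(n):rist_G(n)\,] \;<\;\infty,
\]
which is exactly the claim.

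The argument is essentially bookkeeping; the only thing to be careful about is the identification of $\prod_{v\in Y} rist_G(v)$ as a subgroup of both $st_G(n)$ and $\aut(T_Y)$, and the observation that this identification is preserved under $\rho$ (i.e.\ that $\rho$ restricts to the identity on $\prod_{v\in Y} rist_G(v)$, since these elements are already supported on $T_Y$). Once that is set up cleanly, the finite-index statement is an immediate consequence of the branch condition together with the general fact that surjective homomorphisms do not increase index.
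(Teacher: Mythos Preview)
Your proof is correct and follows essentially the same approach as the paper's own argument: both use the branch hypothesis to get $rist_G(n)$ of finite index in $st_G(n)$, apply the restriction-to-$Y$ homomorphism, and observe that restriction kills the factors $rist_G(v)$ with $v\notin Y$ so that the image of $rist_G(n)$ is exactly $\prod_{v\in Y} rist_G(v)$. Your write-up is simply a more explicit version of the paper's terse proof.
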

\begin{proof}
Since $G$ is branch, $\prod_{v:|v|=n}rist_G(v)$ is of finite index in $st_G(n)$. Restricting to $Y$, we have that $(\prod_{v:|v|=n}rist_G(v))_{|Y}$ is of finite index in $st_G(n)_{|Y}$. It is left to notice that restricting to $Y$ kills every $rist_G(v)$ for $v\not\in Y$, and so $(\prod_{v:|v|=n}rist_G(v))_{|Y}=\prod_{v\in Y}rist_G(v)$
\end{proof}
We  have the following corollary. For every $v$, $rist_G(v)$ is of finite index in $st_G(v)_{|v}$. To derive it from the lemma, take $Y=\{v\}$.\\

We also need the following general lemma
\begin{lemma}
Suppose $G$ acts transitively on a set $X$, and $H<G$ is of index at most $k$. Then the number of $H$-orbits in $X$ is at most $k$. 
\end{lemma}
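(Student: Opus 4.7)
The plan is to reduce the question about $H$-orbits on $X$ to the coset structure of $H$ in $G$, using transitivity to identify $X$ with a quotient of $G$. Since $G$ acts transitively, I would pick a basepoint $x_0 \in X$ and write every element of $X$ as $g \cdot x_0$ for some $g \in G$.

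Next I would use the hypothesis $[G:H] \leq k$ to write a right coset decomposition
\[
G = \bigsqcup_{i=1}^{k'} H g_i
\]
with $k' \leq k$. Given an arbitrary $x = g \cdot x_0 \in X$, decompose $g = h g_i$ for some $h \in H$ and some $i$. Then $x = h \cdot (g_i \cdot x_0)$, so $x$ lies in the $H$-orbit of $g_i \cdot x_0$. This shows every $H$-orbit on $X$ contains some point of the finite set $\{g_1 \cdot x_0, \ldots, g_{k'} \cdot x_0\}$, so there are at most $k' \leq k$ orbits.

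There is essentially no obstacle here; the only thing to be slightly careful about is choosing right rather than left cosets so that the factorization $g = h g_i$ groups the $H$-factor on the correct side to extract an $H$-orbit representative. Equivalently, one could phrase the argument by identifying $X$ with $G / \mathrm{Stab}_G(x_0)$ and noting that the $H$-orbits on $X$ correspond bijectively to the double cosets $H \backslash G / \mathrm{Stab}_G(x_0)$, whose cardinality is clearly bounded by $[G:H] \leq k$. Either formulation gives the bound immediately.
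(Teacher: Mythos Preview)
Your proof is correct and essentially identical to the paper's: both pick a basepoint $x$, write the right coset decomposition $G=\bigcup_{i} H g_i$, and observe that $X=Gx=\bigcup_i H g_i x$ is a union of at most $k$ $H$-orbits. Your additional remarks about right versus left cosets and the double-coset reformulation are fine but not needed.
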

\begin{proof}
Let $G=\cup_{i=1}^k Hg_i$. Then $X=Gx=\cup_{i=1}^k Hg_i x$, where each $Hg_i x$ is an $H$-orbit.
\end{proof}

\begin{lemma}\label{rist}
If $G$ is a branch group, for every vertex $v$, $rist_G(v)$ has a homomorphic image that is also branch.
\end{lemma}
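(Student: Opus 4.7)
The plan is to pass from $H := rist_G(v)$ to its action on a carefully chosen subtree of $T_v$. First, by the corollary to Lemma~\ref{restriction_index} (the case $Y=\{v\}$), $H$ has finite index $k$ in $st_G(v)_{|v}$; and since $G$ is level-transitive on $T_m$, $st_G(v)_{|v}$ acts transitively on every level of $T_v$. The preceding orbit-counting lemma then bounds the number of $H$-orbits on each level of $T_v$ by $k$. Moreover, this orbit count is non-decreasing in the level, since the parent map descends to a well-defined surjection between orbit sets at consecutive levels. Being bounded, it stabilizes from some level $N$ onward. Fix any $H$-orbit $O$ at level $N$; stabilization then forces the set of descendants of $O$ at every level $M\ge N$ of $T_v$ to be a single $H$-orbit.

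Let $T'$ be the rooted tree obtained by placing a virtual root above $O$ and attaching the subtrees $T_u$, $u\in O$; it is spherically homogeneous with branching sequence $(|O|, m_{|v|+N}, m_{|v|+N+1}, \ldots)$. Restriction defines a homomorphism $\phi\colon H\to\aut(T')$, and by the previous paragraph $\phi(H)$ acts transitively on every level of $T'$, giving the first branch axiom.

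For the rigid-stabilizer condition, fix a level $M\ge 1$ of $T'$ and let $W$ be its vertex set, which is a subset of level $|v|+N+M-1$ of $T_m$. Since $W$ is finite, $[\phi(H):st_{\phi(H)}(W)]<\infty$. Restriction to $T_W$ is injective on $st_{\phi(H)}(W)$ and lands in $\prod_{w\in W}st_G(w)_{|w}$: any lift $h\in H$ of an element of $st_{\phi(H)}(W)$ fixes each $w\in W$, so $h|_{T_w}\in st_G(w)_{|w}$. For every $w\in W$, $rist_G(w)\subseteq H$ (because $w$ lies below $v$) and $\phi(rist_G(w))\subseteq rist_{\phi(H)}(w)$, so $\phi(\prod_{w\in W}rist_G(w))\subseteq rist_{\phi(H)}(W)\subseteq st_{\phi(H)}(W)$. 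The corollary to Lemma~\ref{restriction_index} yields $[st_G(w)_{|w}:rist_G(w)]<\infty$ for each $w$, hence $[\prod_{w}st_G(w)_{|w}:\prod_{w}rist_G(w)]<\infty$, and the resulting sandwich forces $[st_{\phi(H)}(W):rist_{\phi(H)}(W)]<\infty$. Combined with the earlier bound, $[\phi(H):rist_{\phi(H)}(W)]<\infty$, verifying the second branch axiom.

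The main obstacle is arranging level-transitivity of $\phi(H)$ on the restricted tree; the orbit-stabilization argument is what isolates a single ``coherent'' orbit $O$ whose descendants do not split further at deeper levels. Once $O$ and $T'$ are chosen, the branch axioms reduce to a direct index computation, where Lemma~\ref{restriction_index} and its corollary do all the heavy lifting, aided by the observation that rigid stabilizers at distinct vertices commute.
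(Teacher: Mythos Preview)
Your proof is correct and follows essentially the same route as the paper's: both identify the finite-index inclusion $rist_G(v)\le st_G(v)_{|v}$ via Lemma~\ref{restriction_index}, use the orbit-stabilization argument (bounded, non-decreasing orbit counts via the parent map on orbit sets) to locate a coherent orbit $O$ whose descendants at each deeper level form a single $H$-orbit, pass to the restricted tree $T'$ rooted above $O$, and then verify the branch axioms for $\phi(H)$ there. The only cosmetic difference is in the rigid-stabilizer check: the paper invokes Lemma~\ref{restriction_index} once with $Y=Y_n$, whereas you apply its single-vertex corollary to each $w\in W$, take the product, and run an explicit sandwich argument through the injective restriction $st_{\phi(H)}(W)\hookrightarrow \prod_w st_G(w)_{|w}$ --- this is the same computation unpacked.
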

\begin{proof}
Since $G$ is transitive on levels of the tree, it follows that $st_G(v)$ is transitive on levels of the subtree $T_v$. The group $rist_G(v)$ may not be transitive on levels of $T_v$, but the number of orbits on each level is uniformly bounded, since $rist_G(v)$ is of finite index in $st_G(v)_{|v}$ by Lemma \ref{restriction_index}.

Let $O_n$ be the set of orbits of $rist_G(v)$ on the $n$-th level of $T_v$. If $Y\in O_n$ is an orbit, define $e(Y)\in O_{n-1}$ as the orbit of (some) parent of a vertex in $Y$. Since $rist_G(v)$ consists of tree automorphisms, the map $e:O_n\to O_{n-1}$ is well-defined (i.e. does not depend on the choice of a vertex in $Y$). We should actually use $e_n$, by we suppress the dependence on $n$ to ease the notation.

Now note that $e$ is surjective for all $n$, since each vertex has children. It follows that the number of elements in $O_n$ cannot decrease. Since it is uniformly bounded, it must become constant after some $n$, and after that the map $e:O_n\to O_{n-1}$ is a bijection, since it is a surjective map between sets of equal size.

It follows that for some $n_0$ there is are orbits $Y_n\in O_n$ for $n\geq n_0$, such that $e^{-1}(Y_n)=Y_{n+1}$, that is, that all children of vertices in $Y_n$ lie in $Y_{n+1}$.  Let $\Gamma$ be the graph which is the union of subtrees $T_w$, $w\in Y_{n_0}$. Then $\Gamma$ is invariant with respect to $rist_G(v)$. Note that $Y_n$ is the union of level sets of $T_w$, $w\in Y_{n_0}$, and that recall that $rist_G(v)$ acts transitively on $Y_n$ for each $n\geq n_0$.

By Lemma \ref{restriction_index} the subgroup $\prod_{w\in Y_n}rist_G(w)$ is of finite index in $rist_G(v)_{|\Gamma}$.

It is left to note that by connecting all vertices in $Y_{n_0}$ to a new vertex (root), we turn $\Gamma$ into a tree on which $rist_G(v)$ acts. By above this action is transitive on levels, and the rigid stabilizers of the levels are of finite index. Thus $rist_G(v)_{|\Gamma}$ is a branch group.
\end{proof}

\begin{corollary}
Finitely generated branched groups are not elementary amenable.
\end{corollary}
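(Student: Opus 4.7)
The plan is to apply the main theorem with $\mathcal{A}$ taken to be the class of all finitely generated branch groups. The hypotheses required are that every $G \in \mathcal{A}$ be finitely generated, non-abelian, infinite, and act on a locally finite rooted tree, and that for every such $G$ and every vertex $v$, the rigid stabilizer $rist_G(v)$ contains a subgroup admitting a homomorphic image in $\mathcal{A}$.

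The ``admissibility'' properties of members of $\mathcal{A}$ are mostly bookkeeping. Finite generation is built in, and the tree $T_m$ is locally finite by definition. The group $G$ is infinite because it is level-transitive with $|\Omega_n| \to \infty$, so $[G : st_G(n)] \to \infty$. Non-abelianness follows from the remark that branch groups cannot have polynomial growth, combined with the fact that every finitely generated abelian group is either finite or of polynomial growth.

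The substantive step is producing the required homomorphic image, and this is Lemma \ref{rist} with one small upgrade: the lemma produces a branch quotient $rist_G(v)_{|\Gamma}$ of $rist_G(v)$, and I need to confirm that this quotient is \emph{finitely generated}. Since $G$ is a finitely generated branch group, $rist_G(n)$ has finite index in $G$ and is therefore finitely generated; local finiteness of the tree makes $rist_G(n) = \prod_{|v|=n} rist_G(v)$ a \emph{finite} direct product, so each factor $rist_G(v)$ is finitely generated as a quotient of $rist_G(n)$. Then $rist_G(v)_{|\Gamma}$ is again a quotient and so finitely generated; by the previous paragraph applied to this branch image, it is also non-abelian and infinite, hence lies in $\mathcal{A}$. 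The main theorem then applies directly and yields the corollary. The only real obstacle in this argument is the finite-generation upgrade just described, which hinges on local finiteness of the tree ensuring that the direct-product decomposition of $rist_G(n)$ has only finitely many factors; everything else reduces to a straightforward verification that the hypotheses of the main theorem are met for this choice of $\mathcal{A}$.
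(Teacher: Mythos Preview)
Your proposal is correct and follows essentially the same route as the paper: take $\mathcal{A}$ to be the class of finitely generated branch groups, use Lemma~\ref{rist} to obtain a branch homomorphic image of each $rist_G(v)$, and upgrade to finite generation via the finite-index inclusion $rist_G(n)\le G$ together with the finite direct-product decomposition $rist_G(n)=\prod_{|v|=n} rist_G(v)$. You add explicit verification that members of $\mathcal{A}$ are infinite and non-abelian, which the paper's proof leaves implicit, but the substance is identical.
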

\begin{proof}
Let $\mathcal{A}$ be the class of all finitely generated branch groups. Then for every for a  vertex $v$ there exist a group from $\mathcal{A}$ on which $rist_G(v)$ projects. Indeed, for a given group $G$ in $\mathcal{A}$, we have that rigid stabilizer of each level is finitely generated. Since it is a product of rigid stabilizers of the vertices on the $n$-th level, we have that all rigid stabilizers of any vertex are finitely generated. By Lemma \ref{rist}, we have that each rigid stabilizer projects onto a branch group, thus, a group from $\mathcal{A}$. Therefore Theorem \ref{main} implies the statement.
\end{proof}

In the next examples we give proofs from scratch, instead of proving that the groups are branch.

\paragraph*{Basilica group.} For the application we will only consider the wreath product presentation of Basilica group, which was introduced in  \cite{zukgrigorchuk:3st}: 
$$a=(1,b) \text{ and } b=(1,a)\varepsilon,$$
where $\varepsilon$ is a non-trivial element of the group $\mathbb{Z}/2\mathbb{Z}$. The proof of the following becomes almost straightforward. 
\begin{corollary}
Basilica group is not elementary subexponentially amenable. In particular, it is not elementary amenable. 
\end{corollary}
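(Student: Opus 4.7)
The plan is to apply Corollary \ref{main} to the Basilica group $G = \langle a,b\rangle$ with $d=2$. The preliminary conditions are straightforward: $G$ is finitely generated by construction; non-abelian, since one computes $[a,b] = (b^{-1}, b) \ne 1$ directly from the wreath product recursions; infinite, since $a^{2^n} = (1, b^{2^n}) \ne 1$ for all $n \ge 0$; and of exponential growth by \cite{zukgrigorchuk:3st}. It thus remains to produce a subgroup $K \le G$ such that (i) $K \times K \le K$ under the wreath product embedding and (ii) $p_v(Stab_K(v)) \supseteq G$ for some vertex $v$ at level~$1$.

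The relevant identities, all immediate from $a=(1,b)$ and $b=(1,a)\varepsilon$, are
\[
bab^{-1} = (b,1), \quad b^2 = (a,a), \quad [b^2,\, bab^{-1}] = ([a,b], 1), \quad [a, b^2] = (1, [b,a]).
\]
The third identity places $([a,b],1)$ in the rigid stabilizer of the vertex $v_0$, and conjugating by $b$ (which swaps the two level-$1$ subtrees) produces an analogous element in the rigid stabilizer of $v_1$. Iterating these commutator identities in the wreath product shows that for every $h$ in a suitable subgroup of $G$ closely related to the commutator subgroup $[G,G]$, both $(h,1)$ and $(1,h)$ lie in $G$. This is the classical statement that Basilica is weakly regular branch over $[G,G]$.

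The plan is then to take $K$ to be this weakly regular branch subgroup, or a suitable enlargement large enough to realize condition (ii). Condition (ii) is witnessed by $a, b^2 \in Stab_G(1)$ having restrictions $b$ and $a$ at the vertex $v_1$, which together generate all of $G$. The main obstacle is choosing $K$ so that (i) and (ii) hold simultaneously: $K$ must be closed under the branching embedding, so that $(k_1,k_2) \in K$ for every $k_1,k_2 \in K$, yet large enough that $Stab_K(v)$ still projects onto $G$ for some level-$1$ vertex~$v$. This is the usual balancing act familiar from the self-similar structure of Basilica, and once such a $K$ is pinned down via the wreath product identities above, Corollary \ref{main} immediately gives that $G$ is not elementary subexponentially amenable, and in particular not elementary amenable.
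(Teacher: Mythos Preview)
Your overall strategy --- apply Corollary~\ref{main} with $K$ the commutator subgroup $G'$, using that Basilica is weakly regular branch over $G'$ --- is exactly the paper's approach, and your identities establishing $G'\times G'\le G'$ are correct. The gap is in condition~(ii). You write that it is ``witnessed by $a,\,b^2\in Stab_G(1)$ having restrictions $b$ and $a$ at the vertex $v_1$'', but the hypothesis requires $p_v(Stab_{K}(v))\supseteq G$, not $p_v(Stab_{G}(v))\supseteq G$. Since $G/G'\cong\mathbb{Z}^2$, neither $a$ nor $b^2$ lies in $K=G'$, so this does not verify~(ii) for~$K$.

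Worse, condition~(ii) \emph{cannot} be verified for $K=G'$ at a first-level vertex. Working with the generators $a=(1,b)$, $bab^{-1}=(b,1)$, $b^2=(a,a)$ of $Stab_G(1)$, the map $g\mapsto(\text{$a$-exponent of }p_2(g))$ is a homomorphism $Stab_G(1)\to\mathbb{Z}$ sending $a,\,bab^{-1}\mapsto 0$ and $b^2\mapsto 1$. Any $g\in G'$ has vanishing $b^2$-exponent sum in these generators (as $\psi(b^2)=2e_2$ must contribute $0$ to the abelianization), hence $p_2(g)$ always has $a$-exponent~$0$. Thus $a\notin p_2(G')$, and $p_2(G')\subsetneq G$. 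Your suggestion to ``enlarge $K$'' does not help without losing~(i), as you yourself note.

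The fix, carried out in the paper, is to take $v$ at level~\emph{two}. From $[b^{-1},a]=(b^{-1},b)$ and $[a,b^2]=(1,[b,a])$ one gets $b,\,b^a\in p_2(G')$; then $b^2=(a,a)$ and $b^a=(b,b^{-1}a)$ show that a further projection recovers both $a$ and $b$, so $p_{22}(Stab_{G'}(22))\supseteq G$. With this correction your plan goes through.
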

\begin{proof}Let $G$ be Basilica group.
Firstly we note that, by \ref{subexponential}, $G$ contains a free semigroup and, thus, it is of exponential growth. We will apply Theorem \ref{main} with $K=G'$.

{\it Claim:} $G'\geq G'\times G'$. 
We start by showing that the projections onto both coordinates of $St_G(1)$ coincide with $G$. Direct computation shows
\begin{align*}
a^b&=(a^{-1},1)\varepsilon (1,b)(1,a)\varepsilon =(a^{-1}ba,1)=(b^a,1)\in St_G(1)\\
b^2&=(a,a)\in St_G(1),
\end{align*}
and moreover $a=(1,b)\in St_G(1)$. Thus, $p_1(St_G(1))=p_2(St_G(1))=G$.\\ 
Note that 
$$[a,b^2]=(1,b^{-1})(a^{-1},a^{-1})(1,b)(a,a)=(1,b^{-1}a^{-1}ba)=(1,[b,a]).$$
Summarizing above, we obtain
$$G'\geq \langle [a,b^2]\rangle^G\geq  \{e\}\times \langle [b,a] \rangle^G=\{e\}\times G'.$$
Moreover, $(\{e\}\times G')^b=G'\times \{e\}$, therefore the claim follows.

Since we have the following expressions
$$[b^{-1},a]=(b^{-1},b)\text{ and }[a,b^2]=(1,b^{-1}a^{-1}ba)$$
we obtain that $b, b^a \in p_2(G')$. But
$$b^2=(a,a) \text{ and }b^a=(b,b^{-1}a).$$
Thus $p_2(p_2(G'))$ contains $G$. In \cite{zukgrigorchuk:3st}, it was shown that $G$ is of exponential growth, we add the proof of this in the  Lemma \ref{subexponential} for completeness. Applying Theorem \ref{main} we obtain the  statement.
\end{proof}

\begin{lemma}[Grigorchuk-Zuk, \cite{zukgrigorchuk:3st}]\label{subexponential}
The semigroup generated by $a$ and $b$ is the free non-abelian group $\mathbb{F}_2$.
\end{lemma}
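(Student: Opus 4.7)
The plan is to prove the two-part statement: (a) no nontrivial positive word in $\{a,b\}$ represents $e$ in $G$, and (b) distinct positive words represent distinct elements of $G$. Together these say that the semigroup map from the free semigroup on $\{a,b\}$ into $G$ is injective, which is the content of the lemma. Both parts are proved by induction on word length using the self-similar wreath-product presentation, with (a) serving as the base case of (b).

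First I set up the section bookkeeping. A direct computation shows that right-multiplication of a wreath-product element $(x_1,x_2)\nu$ by $a$ or $b$ appends a single letter of $\{a,b\}$ to exactly one of $x_1$ or $x_2$, with the choice of coordinate determined only by $\nu$ and by the letter, and with $\nu$ flipped exactly when the letter is $b$. Induction on length then yields the following facts about any positive word $u$: its two sections $u|_1$ and $u|_2$ are again positive words whose word lengths sum to $|u|$; the section $u|_2$ is nonempty whenever $u$ is nonempty, since the first letter is always placed on the second coordinate (we start from $\nu = 1$); and $u|_1$ is empty precisely when $u = a^k$, since only the letter $b$ flips $\nu$.

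For (a), let $u$ be a shortest nontrivial positive word with $u =_G e$. Both sections of $u$ vanish in $G$, and by minimality $u|_1$ must be empty, else it would be a strictly shorter nontrivial positive word equal to $e$. Hence $u = a^k$ and $u|_2 = b^k$, so $b^k =_G e$; the permutation $\varepsilon^k$ must be trivial, forcing $k = 2m$, and the standard identity $b^{2m} = (a^m, a^m)$ reduces us to $a^m =_G e$ with $m < k$, contradicting minimality.

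For (b), take distinct positive words $u \neq v$ with $u =_G v$ minimizing $|u|+|v|$; by (a) both are nonempty. If their first letters agree, cancellation in $G$ yields a strictly shorter counterexample, so their first letters differ; without loss of generality $u = au'$ and $v = bv'$. Writing $u' = (x_1,x_2)\tau$ and $v' = (y_1,y_2)\rho$, the wreath-product formula gives
\[
u = (x_1,\, bx_2)\,\tau, \qquad v = (y_2,\, ay_1)\,\varepsilon\rho,
\]
so $u =_G v$ forces in particular the positive-word relation $bx_2 =_G ay_1$ between words with opposite first letters, whose combined word length equals $|u|+|v| - |x_1| - |y_2|$. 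If $|x_1|+|y_2|>0$, the induction hypothesis applied to this strictly shorter pair forces $bx_2 = ay_1$ as words, which is absurd. The remaining case, where $x_1$ and $y_2$ are both empty, is the one place where the length induction does not shrink; the bookkeeping then forces $v'$ to be empty (so $v = b$) and $u' = a^j$ (so $u = a^{j+1}$), and the permutation mismatch $\sigma_u = 1 \neq \varepsilon = \sigma_v$ contradicts $u =_G v$. This degenerate configuration, where a section can attain the full length of the original word, is the main and essentially the only real subtlety in the argument.
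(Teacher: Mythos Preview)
Your overall strategy---first proving that no nontrivial positive word is trivial in $G$, then running a minimal-counterexample argument on pairs via the first-level sections---is sound and close in spirit to the paper's own proof, which also argues by minimizing $|V|+|W|$ and projecting to sections (though the paper organizes the case analysis around the parity of $b$'s and the block decomposition $a^n$, $ba^mb$ rather than around first letters).

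There is, however, a concrete error in your bookkeeping. You assert that $u|_1$ is empty precisely when $u=a^k$. This is false: for example $b=(1,a)\varepsilon$ and $ab=(1,ba)\varepsilon$ both have empty first section. Your own rule shows that a letter lands in coordinate~$1$ exactly when an odd number of $b$'s precede it, so $u|_1$ is empty iff every \emph{proper} prefix contains no $b$; this allows a single $b$ in the last position, i.e.\ $u\in a^*\cup a^*b$. In part~(a) the gap is harmless once one also uses that $u=_G e$ forces trivial root permutation, ruling out $a^{k-1}b$. In part~(b) the gap is real: from $x_1=u'|_1$ empty you cannot conclude $u'=a^j$; the alternative $u'=a^{j-1}b$ gives $u=a^{j}b$ and $v=b$, which carry the \emph{same} root permutation $\varepsilon$, so your ``permutation mismatch $\sigma_u=1\neq\varepsilon=\sigma_v$'' does not apply. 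The repair is short---$a^{j}b=_G b$ yields $a^{j}=_G e$, hence $j=0$ by~(a), contradicting $u\neq v$---but as written your argument is incomplete exactly at the point you flag as the main subtlety.
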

\begin{proof}

To reach a contradiction consider two different words $V$ and $W$  that represents the same element in $G$ and such that $\rho=|V|+|W|$ is minimal. It is easy to check that $\rho$ can not be $0$ or $1$.

From the wreath product representation of $G$ we see that the parity of occurrences of $b$ in $V$ and in $W$ should be the same. Both words $V$ and $W$ are products of the elements of the form
$$a^n=(1,b^n), \quad n\geq 0$$
or of the form
$$ba^m b=(1,a)\varepsilon  (1,b^m)(1,a)\varepsilon=(b^ma,a), \quad m\geq 0.$$
Assume that one of the words does not contain $b$, say $W=a^n$. Then, since $a$ is of infinite order, $V$ must contain $b$. Projecting $V$ to the first coordinate we obtain a word which is equal to identity. Moreover the length of this word is strictly less $\rho$, which gives a contradiction. Thus we can assume that both words contain $b$. 

Suppose that the number of $b$'s in $W$ and $V$ is one, then by minimality these words should have the form $ba^n$ and $a^mb$.  But $ba^n=(b^n,a)\varepsilon$ and $a^mb=(1,b^ma)\varepsilon$, which represent two different words.

Multiplying by $b$ both words $V$ and $W$, we can assume that $b$ appears even number of times. This can increase the length of  $V$ and $W$  by at most $1$. Suppose that in both words $b$ appears twice. 

Thus either both words contain two $b$'s and $|V|+|W|= \rho$ or one of them contains at least four $b$'s, in which case we have $|U|+|V|\leq \rho+2$. In the first case, taking the projection of the words onto the second coordinate we see that $\rho$ decreases by $2$ for these projections, which contradicts  minimality. Similarly, in the second case $\rho$ decreases by $3$ which again contradicts minimality. Hence, the statement follows.
\end{proof}

\paragraph*{The first Grigorchuk's group.}  Historically the first Grigorchuk's group was defined in \cite{grigorchuk:80_en}, as a group  generated by $4$ automorphisms of the binary tree. Its  wreath product presentation  is the following:
$$a=(1,1)\varepsilon,\quad b=(a,c), \quad c=(a,d), \quad d=(1,b),$$
where $\varepsilon$ is a non-trivial element of the group $\mathbb{Z}/2\mathbb{Z}$.
\begin{corollary}
Grigorchuk's group  is not elementary amenable. 
\end{corollary}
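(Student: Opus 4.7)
The plan is to apply Corollary~\ref{main} directly, in close analogy with the Basilica argument. Grigorchuk's group $G$ is finitely generated (by $a,b,c,d$), infinite, and non-abelian, so the preliminary hypotheses will be satisfied. What remains is to exhibit a subgroup $K \le G$ with (i) $K \times K \subset K$ via the wreath-product embedding, and (ii) $p_v(Stab_K(v)) \supseteq G$ for some vertex $v$ of the binary tree.

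The natural first attempt is $K = G'$. Using the wreath-product expressions $a = (1,1)\varepsilon$, $b = (a,c)$, $c = (a,d)$, $d = (1,b)$ and their derivatives $aba = (c,a)$, $aca = (d,a)$, $ada = (b,1)$, I would compute commutators such as $[d, aba] = (1, (ba)^2)$ and $[ada, b] = ((ba)^2, 1)$, which exhibit elements of $G'$ of the form $(1, w)$ and $(w, 1)$ with $w = [b,a]$. Closing under conjugation by elements of $Stab_G(1) = \langle b, c, d, aba, aca, ada \rangle$, and using that $b, c, d$ commute pairwise so that $G'$ is already the normal closure of $[a,b], [a,c], [a,d]$ alone (indeed $[b,c] = [b,d] = [c,d] = 1$), one should be able to build up $\{1\} \times G'$ and $G' \times \{1\}$ inside $G'$, yielding $G' \times G' \subseteq G'$. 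For (ii), I would use that $p_{x_i}(Stab_G(1)) = G$ already holds (since $b \mapsto a$, $aba \mapsto c$, $aca \mapsto d$, $ada \mapsto b$ at $x_1$, and symmetrically at $x_2$); to upgrade from $Stab_G(1)$ to $K = G'$, descend to a length-two vertex $v$ and invoke (i), exactly as in the Basilica proof.

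The main obstacle will be the careful bookkeeping of the normal closures in step (i): one must verify that the conjugates of $(1, (ba)^2)$ and $((ba)^2, 1)$ under $Stab_G(1)$ actually exhaust $\{1\} \times G'$ and $G' \times \{1\}$, i.e., that they include elements projecting to all three generators $[a,b], [a,c], [a,d]$ of $G'$. If this step proves cumbersome, a clean fallback is to replace $K = G'$ by the classical branching subgroup $\langle (ab)^2 \rangle^G$ of Grigorchuk's group, whose self-similarity $K \times K \subset K$ is built into its construction and for which (ii) can be read off directly from the defining wreath-product relations.
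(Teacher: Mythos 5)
Your overall strategy is the paper's: apply Corollary~\ref{main} (via Theorem~1) to a self-replicating subgroup $K$, and your fallback choice $K=\langle (ab)^2\rangle^G$ is literally the subgroup the paper uses. However, neither of your two routes is actually carried out, and the primary one contains a step that fails as described. For $K=G'$: your seed elements $(1,(ba)^2)$ and $((ba)^2,1)$ are indeed in $G'$, but closing them under conjugation cannot produce $\{1\}\times G'$ and $G'\times\{1\}$. Conjugating $(1,w)$ by $(g_1,g_2)\in St_G(1)$ gives $(1,w^{g_2})$, so the group generated by all such conjugates (even throwing in conjugation by $a$, which swaps coordinates) is exactly $\langle (ba)^2\rangle^G\times\langle (ba)^2\rangle^G$. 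Since $\langle (ab)^2\rangle^G$ is a \emph{proper} subgroup of $G'$ (it has index $16$ in $G$, while $G'$ has index $8$), you never reach elements such as $(1,(ac)^2)$ or $(1,(ad)^2)$, so the verification you propose cannot establish $G'\times G'\le G'$. To rescue this route you would need genuinely new seeds $(1,t)\in G'$ with $t\in G'\setminus\langle (ab)^2\rangle^G$, and it is not clear they exist: the natural candidates do not isolate a coordinate, e.g.\ $(b^ac)^2=((ca)^2,(ad)^2)$.

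Your fallback is sound on condition (i): the paper checks $K\times K<K$ for $K=\langle (ab)^2\rangle^G$ exactly as you suggest, via $(abad)^2=(ab)^2d^{-1}(ab)^{-2}d=(1,abab)$ and its conjugate $(abab,1)$ by $a$. But condition (ii) is \emph{not} ``read off directly from the defining wreath-product relations,'' and this is where the real work of the paper lies. From $(ab)^2=(ca,ac)$ one only gets that $p_1(St_K(1))$ contains $ca$, hence contains $H=\langle ca\rangle^G$, which is far from all of $G$. The paper then descends two more levels: using $(ca)^2=(ad,da)$ and $ac(ca)^d=(b,b)$ it shows $H'=p_1(St_H(1))$ contains $ac$, $ad$ and $b$, and only then that $p_1(St_{H'}(1))$ contains $a$, $c$ and $d$, i.e.\ all of $G$, at a vertex of level $3$. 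This multi-level descent is the substantive content of the proof and is absent from your proposal; as written, the argument has a genuine gap at step (ii) for either choice of $K$.
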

\begin{proof}
Let $K$ be a normal subgroup generated by $(ab)^2$. It is well known, see for example \cite{Harpe}, that $K\times K< K$. We will prove this here for completeness. Note that $a^2=b^2=c^2=d^2=1$, $b=cd$ and $b,c,d$ generate abelian group. It is easy to check that both $p_0(St_G(1))$ and $p_1(St_G(1))$ contain $G$. Since $(abad)^2=(ab)^2d^{-1}(ab)^{-2}d$, we have $(abad)^2$ is in $K$. It follows 
$$(abad)^2=(1,abab)\in K.$$
Moreover, $a(abad)^2a^{-1}=(abab,1)\in K$. Therefore, we have $K\times K<K$.\\

Now we will show that there exists a vertex $v$ such that $p_1(St_K(v))$ contains $G$. Note that 
$$(ab)^2=(ca,ac)\in K.$$
Therefore, $p_1(St_K(1))$ contains $ca$ and, thus, it contains $H=\langle ca \rangle^G$. We claim that $ac,ad,b$ are in $H'=p_1(St_H(1))$. This implies that $\langle ad, b \rangle^G$ is a subgroup in $H'$. From this it follows that the elements $b=(a,c)$, $b^{a}=(c,a)$, $(ad)b(ad)=(d,ab)$ are in $H'$, and $p_1(St_{H'}(1))$ contains $G$ (because it contains $a,c$ and $d$), which implies the statement. 

Our claim follow from the following computations: 
\begin{align*}
ca&=(a,d)\varepsilon,  \\
(ca)^2&=(ad,da)\in H, \\
ac(ca)^d&=(d,a)\varepsilon (ab,bd)\varepsilon=(b,b)\in  H.
\end{align*}
\end{proof}

\end{document}